\newcounter{todocounter}
\newcommand{\ds}{\displaystyle}
\newcommand{\vx}{\textbf{x}}
\newcommand{\vy}{\textbf{y}}
\newcommand{\vc}{\textbf{c}}
\newcommand{\VV}{\mathcal{V}}
\newcommand{\vD}{\textbf{D}}
\newcommand{\vT}{\textbf{T}}
\newcommand{\Z}{\mathbb{Z}}
\newcommand{\R}{\mathbb{R}}
\newcommand{\N}{\mathbb{N}}
\newcommand{\C}{\mathbb{C}}
\newcommand{\vone}{\textbf{1}}
\theoremstyle{plain}
\newtheorem{theorem}{Theorem}[section]
\newtheorem{proposition}[theorem]{Proposition}
\theoremstyle{definition}
\newfont{\footsc}{cmcsc10 at 8truept}
\newfont{\footbf}{cmbx10 at 8truept}
\newfont{\footrm}{cmr10 at 10truept}
\renewenvironment{abstract}{
	\begin{list}{}%
	{\setlength{\rightmargin}{1in}%
	\setlength{\leftmargin}{1in}}%
	\item[]\ignorespaces\begin{small}}%
	{\end{small}\unskip\end{list}%
}
\newcommand*\patchAmsMathEnvironmentForLineno[1]{%
  \expandafter\let\csname old#1\expandafter\endcsname\csname #1\endcsname
  \expandafter\let\csname oldend#1\expandafter\endcsname\csname end#1\endcsname
  \renewenvironment{#1}%
     {\linenomath\csname old#1\endcsname}%
     {\csname oldend#1\endcsname\endlinenomath}}% 
\newcommand*\patchBothAmsMathEnvironmentsForLineno[1]{%
  \patchAmsMathEnvironmentForLineno{#1}%
  \patchAmsMathEnvironmentForLineno{#1*}}%
\title{\sc The Asymptotic Number of Simple Singular Vector Tuples of a Cubical Tensor}
\author{%
	Jay Pantone\\
	\small Department of Mathematics\\[-3pt]
	\small Dartmouth College\\[-3pt]
	\small Hanover, New Hampshire USA
}
\titleformat{\section}{\large\sc}{\thesection.}{1em}{}
\date{}
\begin{document}
\maketitle

\pagestyle{main}

\begin{abstract}
	S. Ekhad and D. Zeilberger recently proved that the multivariate generating function for the number of simple singular vector tuples of a generic $m_1 \times \cdots \times m_d$ tensor has an elegant rational form involving elementary symmetric functions, and provided a partial conjecture for the asymptotic behavior of the cubical case $m_1 = \cdots = m_d$. We prove this conjecture and further identify completely the dominant asymptotic term, including the multiplicative constant. Finally, we use the method of differential approximants to conjecture that the \emph{subdominant connective constant} effect observed by Ekhad and Zeilberger for a particular case in fact occurs more generally.
\end{abstract}

\section{Introduction}

In this note, we confirm a conjecture of Ekhad and Zeilberger~\cite{zeilberger:singular-vector-tuples} regarding the number of simple singular vector tuples of a generic $m_1 \times \cdots \times m_d$ complex tensor. We refer the reader to the work of Friedland and Ottaviani~\cite{friedland:singular-vector-tuples} for the definitions of these terms, as they are not important to the content of this article. Therein, the authors prove the following theorem.

\begin{theorem}[{Friedland and Ottaviani~\cite[Theorem 1]{friedland:singular-vector-tuples}}]
	\label{theorem:fried}
	The number of simple singular vector tuples of a generic $m_1 \times \cdots \times m_d$ complex tensor is equal to the coefficient of $t_1^{m_1-1}\cdots t_d^{m_d-1}$ in the expression
	\[
		\prod_{i=1}^d \frac{\widehat{t_i}^{m_i} - {t_i}^{m_i}}{\widehat{t_i}-t_i}, \qquad \text{where \;\; } \widehat{t_i} = \left(\sum_{j=1}^d t_j\right) - t_i.
	\]
\end{theorem}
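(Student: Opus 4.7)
The plan is to realize the set of projective singular vector tuples as the zero locus of a holomorphic section of a vector bundle on $X = \mathbb{P}^{m_1-1} \times \cdots \times \mathbb{P}^{m_d-1}$, and then apply intersection theory to reduce the count to a coefficient extraction. The defining condition $T(v_1,\ldots,\widehat{v_i},\ldots,v_d) \in \mathrm{span}(v_i)$ for every $i$ says that each contraction vanishes modulo the tautological line $\mathcal{O}_{\mathbb{P}^{m_i-1}}(-1)$. Writing $\pi_i : X \to \mathbb{P}^{m_i-1}$ for the projection, this vanishing is a section of $\pi_i^* Q_i$, where $Q_i$ is the tautological rank-$(m_i-1)$ quotient bundle on $\mathbb{P}^{m_i-1}$; because the contraction is linear in each $v_j$ with $j \ne i$, there is also a twist by the line bundle $L_i := \bigotimes_{j \ne i} \pi_j^* \mathcal{O}(1)$. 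Packaging these, any tensor $T$ induces a global section $s_T$ of
\[
E \;:=\; \bigoplus_{i=1}^d \bigl(\pi_i^* Q_i \otimes L_i\bigr),
\]
whose zero locus is exactly the set of projective singular vector tuples of $T$; note that $\mathrm{rank}(E) = \sum_i (m_i-1) = \dim X$.

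Next I would argue that for generic $T$ the section $s_T$ is transverse to the zero section, so that its zero scheme consists of $\int_X c_{\mathrm{top}}(E)$ simple isolated points and the desired count equals this integral. I expect this transversality step to be the main technical obstacle; a natural route is to exhibit, at each candidate tuple $([v_1],\ldots,[v_d])$, a deformation of $T$ whose induced variation of $s_T$ surjects onto the fiber of $E$, and then invoke a Bertini--Sard argument on the incidence variety over the space of tensors.

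Once the count is reduced to $\int_X c_{\mathrm{top}}(E)$, only a Chern-class computation remains. Setting $t_i := c_1(\pi_i^*\mathcal{O}(1))$, the cohomology ring is $H^*(X) \cong \mathbb{Z}[t_1,\ldots,t_d]/(t_1^{m_1},\ldots,t_d^{m_d})$, and the tautological exact sequence $0 \to \mathcal{O}(-1) \to \mathcal{O}^{m_i} \to Q_i \to 0$ gives $c(Q_i) = 1/(1-t_i)$, so $c_k(Q_i) = t_i^k$ for $0 \le k \le m_i-1$. Applying the twist identity $c_r(V \otimes L) = \sum_{k=0}^r c_k(V)\, c_1(L)^{r-k}$ with $r = m_i-1$ and $c_1(L_i) = \widehat{t_i}$ produces
\[
c_{m_i-1}\bigl(\pi_i^* Q_i \otimes L_i\bigr) \;=\; \sum_{k=0}^{m_i-1} t_i^{\,k}\, \widehat{t_i}^{\,m_i-1-k} \;=\; \frac{\widehat{t_i}^{\,m_i} - t_i^{\,m_i}}{\widehat{t_i} - t_i}.
\]
Multiplying these across $i$ yields $c_{\mathrm{top}}(E)$, and since $\int_X$ extracts precisely the coefficient of $t_1^{m_1-1}\cdots t_d^{m_d-1}$, the Friedland--Ottaviani formula drops out.
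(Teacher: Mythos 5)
This theorem is not proved in the paper under review; it is quoted verbatim from Friedland and Ottaviani, who establish it by exactly the route you sketch: realizing projective singular vector tuples as the zero locus of a section of $E = \bigoplus_i \pi_i^* Q_i \otimes L_i$ on $\prod_i \mathbb{P}^{m_i-1}$, proving global generation of $E$ (so a Bertini argument gives transversality for generic $T$), and computing $\int_X c_{\mathrm{top}}(E)$ via $c_k(Q_i) = t_i^k$ and the line-bundle twist formula. Your reconstruction is correct and matches theirs; the only part left as a sketch, the transversality/global-generation step, is precisely what they verify by exhibiting, after normalizing $v_i = e_1^{(i)}$, independent tensor entries that surject onto each fiber summand.
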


Denoting by $a_d(m_1, \ldots, m_d)$ the quantity described in Theorem~\ref{theorem:fried}, Ekhad and Zeilberger~\cite{zeilberger:singular-vector-tuples} derived a rational generating function for this multi-indexed sequence.

\begin{theorem}[{Ekhad and Zeilberger~\cite[Proposition 1]{zeilberger:singular-vector-tuples}}]
	\label{theorem:zeil}
	Let $e_i(x_1, \ldots, x_d)$ be the $i$th elementary symmetric function
	\[
		e_i(x_1, \ldots, x_d) = \ds\sum_{1 \leq r_1 < \cdots < r_i \leq d} x_{r_1}\cdots x_{r_i}.
	\]
	Then, the multivariate generating function $A_d(x_1, \ldots, x_d)$ for the sequence $a_d(m_1, \ldots, m_d)$ is
	\[
		A_d(x_1, \ldots, x_d) = \sum_{\mathclap{m_1,\ldots, m_d \geq 0}} a_d(m_1, \ldots, m_d) x_1^{m_1} \ldots x_d^{m_d} = \left(1 - \ds\sum_{i=2}^d (i-1)e_i(x_1, \ldots, x_d)\right)^{-1}\prod_{i=1}^d \frac{x_i}{1-x_i}.
	\]
\end{theorem}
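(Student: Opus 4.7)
The plan is to reduce the coefficient extraction of Theorem~\ref{theorem:fried} to a form to which MacMahon's Master Theorem applies, and then evaluate the resulting determinant explicitly.

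First I would expand each factor using the finite geometric identity
\[
\frac{\widehat{t_i}^{m_i}-t_i^{m_i}}{\widehat{t_i}-t_i} = \sum_{k_i=0}^{m_i-1} \widehat{t_i}^{k_i}\, t_i^{m_i-1-k_i},
\]
distribute the resulting product over $i$, and factor out the monomial $\prod_i t_i^{m_i-1-k_i}$. The extraction $[t_1^{m_1-1}\cdots t_d^{m_d-1}]$ should then collapse to the simpler extraction
\[
b_d(\mathbf{k}) := [t_1^{k_1}\cdots t_d^{k_d}]\prod_{i=1}^d \widehat{t_i}^{k_i},
\]
yielding the decomposition $a_d(m_1,\ldots,m_d) = \sum_{\mathbf{k}} b_d(\mathbf{k})$ summed over $\mathbf{k}$ with $0 \le k_i \le m_i-1$.

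The decisive observation is that $\widehat{t_i} = \sum_{j\ne i} t_j$ is precisely the $i$th linear form associated to the matrix $J-I$ (ones off-diagonal, zeros on the diagonal). MacMahon's Master Theorem would then immediately give
\[
\sum_{\mathbf{k}\ge \mathbf{0}} b_d(\mathbf{k})\, x_1^{k_1}\cdots x_d^{k_d} = \frac{1}{\det(I-X(J-I))},
\]
where $X=\operatorname{diag}(x_1,\ldots,x_d)$. To handle the determinant, I would apply the matrix determinant lemma with the rank-one factorization $XJ = \mathbf{x}\mathbf{1}^T$, or equivalently differentiate the polynomial $P(s) = \prod_i(1+sx_i) = \sum_{k\ge 0} s^k e_k(\mathbf{x})$, and recognize that
\[
\det(I-X(J-I)) = P(1)-P'(1) = \sum_{k\ge 0}(1-k)\,e_k(\mathbf{x}) = 1-\sum_{i=2}^d (i-1)\,e_i(\mathbf{x}).
\]

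To assemble the full generating function, I would write $m_i - 1 = k_i + l_i$ with $l_i \ge 0$, so that the sum $\sum_{\mathbf{m}\ge \mathbf{1}} a_d(\mathbf{m})\,\mathbf{x}^\mathbf{m}$ factors as $\prod_i x_i$ (from the shift by $\mathbf{1}$), times the MacMahon reciprocal above (from the $\mathbf{k}$-sum), times $\prod_i 1/(1-x_i)$ (from the free $\mathbf{l}$-sum), producing exactly the claimed formula. I don't expect any serious obstacle: the only real step of insight is noticing that the telescoping identity transforms the problem into one to which MacMahon's Master Theorem applies directly, after which everything reduces to a routine determinant evaluation.
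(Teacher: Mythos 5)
The paper does not actually contain a proof of Theorem~\ref{theorem:zeil}: it is quoted verbatim from Ekhad and Zeilberger~\cite{zeilberger:singular-vector-tuples} and used as a black box, so there is no in-text proof to compare against. Your argument via MacMahon's Master Theorem is correct and is, in fact, the route Ekhad and Zeilberger take in the cited source. The telescoping expansion reduces $a_d(\mathbf{m})$ to $\sum_{0\le k_i\le m_i-1} b_d(\mathbf{k})$; the linear forms $\widehat{t_i}=\sum_{j\neq i}t_j$ correspond to $A=J-I$, so MacMahon gives $\sum_{\mathbf{k}} b_d(\mathbf{k})\mathbf{x}^{\mathbf{k}} = \det(I-X(J-I))^{-1}$; writing $I-X(J-I)=(I+X)-\mathbf{x}\mathbf{1}^T$ and applying the matrix determinant lemma yields $\prod_i(1+x_i) - \sum_i x_i\prod_{j\neq i}(1+x_j) = \sum_{k\ge0}(1-k)e_k(\mathbf{x}) = 1-\sum_{i\ge2}(i-1)e_i(\mathbf{x})$; and the substitution $m_i-1=k_i+l_i$ (valid since $a_d(\mathbf{m})=0$ whenever some $m_i=0$) factors the full generating function as $\bigl(\prod_i x_i\bigr)\cdot\bigl(\sum_{\mathbf{k}}b_d(\mathbf{k})\mathbf{x}^{\mathbf{k}}\bigr)\cdot\bigl(\prod_i(1-x_i)^{-1}\bigr)$, exactly as claimed. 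No gaps.
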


We are primarily interested in the number of simple singular vector tuples of tensors for which $m_1 = \cdots = m_d$, known as \emph{cubical tensors}. Denote
\[
	C_d(n) = a_d(\underbrace{n, \ldots, n}_{d\text{ times}}),
\]
and observe that Theorem~\ref{theorem:zeil} implies that the generating function $F_d(x)$ of the sequence $\{C_d(n)\}_{n \geq 0}$ is the diagonal of $A_d(x_1, \ldots, x_d)$; that is,
\[
	F_d(x) = \ds\sum_{n \geq 0} C_d(n)x^n = \ds\sum_{n \geq 0} [x_1^n \cdots x_d^n] A_d(x_1, \ldots, x_d)x^n.
\]
Here $[x_1^n\cdots x_d^n]A(x)$ denotes the coefficient of $x_1^n\cdots x_d^n$ in $A(x)$.

A univariate generating function $A(x)$ is said to be D-finite if it is the solution of a nontrivial linear differential equation with polynomial coefficients (in $x$), and a sequence $a(n)$ is said to be P-recursive if it satisfies a recurrence relation of the form
\[
	p_0(n)a(n) + p_1(n)a(n-1) + \cdots + p_k(n)a(n-k) = 0
\]
where each $p_i(n)$ is a polynomial and $p_0(n) \neq 0$. These two notions are in fact equivalent---a generating function $A(x)$ is D-finite if and only if the coefficients of its power series expansion are P-recursive.

The theory of D-finite functions (see, e.g., Zeilberger~\cite{zeilberger:sister-celine}, Christol~\cite{christol:diagonales}, and Lipshitz~\cite{lipshitz:diagonal-D-finite}) guarantees that each of the functions $F_d(x)$ is D-finite, as they are diagonals of rational functions. Unfortunately, current implementations of constructive approaches to finding $F_d(x)$ cannot handle even $d=5$.

Ekhad and Zeilberger~\cite{zeilberger:singular-vector-tuples} provide the recurrence relation for $C_3(n)$ and use this to find that the asymptotic behavior of the sequence is
\[
	C_3(n) \sim \frac{2}{\pi\sqrt{3}} \,8^n\,n^{-1}.
\]
The exponential growth rate $8$ (sometimes called the \emph{connective constant}) and the polynomial exponent $-1$ are derived rigorously from the recurrence relation for $C_3(n)$, while the multiplicative constant $2/(\pi\sqrt{3})$ is estimated through the calculation of many initial terms. After calculating $160$ initial terms of $C_4(n)$, Ekhad and Zeilberger further conjecture that 
\[
	C_4(n) \sim \alpha_4 \, 81^n \, n^{-3/2},
\]
for an unknown constant $\alpha_4$. Combining these with other numerical calculations, Ekhad and Zeilberger ultimately conjecture that
\[
	C_d(n) \sim \alpha_d \, ((d-1)^d)^n\,n^{(1-d)/2}.
\]

In Section~\ref{section:main} we confirm this conjecture, and more, by using multivariate asymptotic methods to prove the following theorem.
\begin{theorem}
	\label{theorem:asymp}
	For $d \geq 3$, the number $C_d(n)$ of simple singular vector tuples of a $d$-dimensional $n \times \cdots \times n$ cubical tensor is asymptotically
	\[
		C_d(n) = \frac{(d-1)^{d-1}}{(2\pi)^{(d-1)/2}d^{(d-2)/2}(d-2)^{(3d-1)/2}} \, ((d-1)^d)^n \, n^{(1-d)/2} \left( 1 + O\left(\frac{1}{n}\right)\right).
	\]
\end{theorem}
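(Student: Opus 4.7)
The plan is to apply the methods of analytic combinatorics in several variables (ACSV), following Pemantle and Wilson, to the rational function $A_d(\vx) = G(\vx)/H(\vx)$, where
\[
    G(\vx) = \prod_{i=1}^d \frac{x_i}{1-x_i}, \qquad H(\vx) = 1 - \sum_{i=2}^d (i-1) e_i(\vx).
\]
The factor $\prod(1-x_i)$ is holomorphic at the candidate critical point, so only the smooth denominator factor $H$ contributes singularities relevant for the asymptotics.

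First I would locate the critical point of the diagonal direction. By the $S_d$-symmetry of $A_d$, I search for a critical point of the form $(x^*,\ldots,x^*)$ on $\VV(H)$. Since $e_i(x,\ldots,x) = \binom{d}{i}x^i$ and $\sum_{i=2}^d (i-1)\binom{d}{i}x^i = dx(1+x)^{d-1} - (1+x)^d + 1$, a short calculation gives the clean factorization
\[
    H(x,\ldots,x) = (1+x)^{d-1}\bigl(1-(d-1)x\bigr),
\]
so $x^* = 1/(d-1)$, immediately confirming the conjectured exponential rate $(x^*)^{-d} = (d-1)^d$. I would then verify strict minimality of $\vx^* = (x^*,\ldots,x^*)$: because $1/H$ has nonnegative power series coefficients (it enumerates simple singular vector tuples), Pringsheim's theorem combined with the factorization above shows $\vx^*$ is the unique zero of $H$ reached along the positive diagonal, and a standard aperiodicity argument rules out other zeros of the same polyradius on the torus.

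Second, I would apply the Pemantle--Wilson smooth-point formula, which yields
\[
    C_d(n) \sim \frac{-G(\vx^*)}{x_d^*\, H_{x_d}(\vx^*)} \cdot \frac{\bigl((x^*)^d\bigr)^{-n}}{\sqrt{(2\pi n)^{d-1}\,\HH(\vx^*)}},
\]
where $\HH(\vx^*)$ is the determinant of the Hessian of the logarithmic parameterization of $\VV(H)$ at $\vx^*$. By the full $S_d$-symmetry, this $(d-1)\times(d-1)$ Hessian has the form $aI + bJ$ (identity plus all-ones matrix) for explicit scalars $a,b$ computable from first and second partials of $e_i$ at $(x^*,\ldots,x^*)$, so its determinant equals $a^{d-2}(a + (d-1)b)$. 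Evaluating $G(\vx^*) = ((d-1)(d-2))^{-d}$ and $x_d^* H_{x_d}(\vx^*)$ (a rational expression in $d$ by the same symmetry argument) and assembling all factors then yields a closed-form asymptotic.

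The main obstacle is the bookkeeping in the Hessian computation and the final algebraic simplification into the stated constant $(d-1)^{d-1}/\bigl((2\pi)^{(d-1)/2} d^{(d-2)/2}(d-2)^{(3d-1)/2}\bigr)$. The presence of $(d-2)^{(3d-1)/2}$ suggests that large powers of $1 - x^* = (d-2)/(d-1)$, coming from $d$ copies of $(1-x_i)$ in $G$ together with the Hessian factors, must be tracked carefully; I expect them to collapse neatly because of the one-variable product factorization $(1+x)^{d-1}(1-(d-1)x)$ of $H$ on the diagonal. The $O(1/n)$ error term follows from the standard higher-order expansion for smooth minimal critical points in Pemantle--Wilson.
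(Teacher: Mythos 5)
Your proposal follows the same ACSV smooth-point framework as the paper (Pemantle--Wilson / Raichev--Wilson): same critical point $\vc = \tfrac{1}{d-1}\vone$, same one-variable factorization $(1+x)^{d-1}(1-(d-1)x)$ on the diagonal, aperiodicity for strict minimality, and a Hessian-determinant evaluation that the paper obtains via Raichev--Wilson's symmetric formula $\det\widetilde g''(0)=dq^{d-1}$ (equivalent to your $aI+bJ$ reduction); the only structural difference is that you absorb $\prod(1-x_i)^{-1}$ into the numerator $G$, whereas the paper keeps $\prod(1-x_i)$ in $H$ and then rules out the spurious components $\{x_i=1\}$ during the minimality proof. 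One arithmetic slip to fix before you assemble the constant: with your $G(\vx)=\prod x_i/(1-x_i)$, at $\vc$ each factor is $c_i/(1-c_i)=1/(d-2)$, so $G(\vc)=(d-2)^{-d}$, not $((d-1)(d-2))^{-d}$.
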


In Section~\ref{section:computations} we discuss the intractability of the computational problem of determining $F_d(x)$ exactly for small $d$, and we apply the method of differential approximants to explore the phenomenon of subdominant connective constants.

\section{The Asymptotic Behavior of $C_d(n)$}
\label{section:main}

Only recently have the techniques of analytic combinatorics been reliably extended to the multivariate case. In this section we appeal primarily to two articles of Raichev and Wilson~\cite{raichev:asymptotics-improvements-smooth, raichev:new-method-asymptotics} and one of Pemantle and Wilson~\cite{pemantle:twenty-combinatorial-examples}. We start by repeating the necessary definitions and theorems from these articles.

For a $d$-dimensional complex vector $\vx$, define
\begin{align*}
	G_d(\vx) &= \prod_{i=1}^d x_i,\\
	H_d(\vx) &= \left(\prod_{i=1}^d (1-x_i)\right) \left(1 - \ds\sum_{i=2}^d (i-1)e_i(\vx)\right),
\end{align*}
so that $A_d(\vx) = G_d(\vx)/H_d(\vx)$ is the generating function whose main diagonal asymptotic behavior we wish to compute. Going forward, we will drop the subscript when the context is clear.

Let $\VV$ be the variety defined by $H(\vx) = 0$. For complex $\vx$, define the polydisk $\vD(\vx)$ and the torus $\vT(\vx)$ by 
\begin{align*}
	\vD(\vx) &= \left\{\vx' : |x_i'| \leq |x_i| \text{ for all } i\right\},\\
	\vT(\vx) &= \left\{\vx' : |x_i'| = |x_i| \text{ for all } i\right\}.
\end{align*}

A point $\vx \in \VV$ is said to be \emph{minimal} if all of its coordinates are non-zero and $\VV \cap \vD(\vx) \subset \vT(\vx)$. Further, $\vx$ is \emph{strictly minimal} if it is the only point of $\VV$ in $\vT(\vx)$. 

For many practical examples, the primary obstacle in computing the asymptotic expansion of the diagonal (or more generally, the asymptotic expansion in any direction) is detecting which points of $\VV$ contribute to the asymptotic behavior. We will use a variety of direct calculations to show that for each $d$, the asymptotic behavior of the sequence $C_d(n)$ is governed by a single strictly minimal point in the positive orthant $\R_+^d$.

Theorem~\ref{theorem:asymp} will then be proved by applying a theorem of Raichev and Wilson~\cite{raichev:asymptotics-improvements-smooth}. The theorem is stated below; we have simplified it to apply only to asymptotic behavior along the main diagonal $F_{n\vone}$ of a $d$-variate generating function $F(\vx) = G(\vx)/H(\vx)$.\footnote{More specifically, we have substituted $\alpha = \vone$ and $p=1$.} Definitions of new terms are given after the statement of the theorem, and we use the short-hand $\partial_i H(\vy)$ to denote the partial derivative of $H(\vx)$ with respect to $x_i$, evaluated at $\vx = \vy$. We also use $\widehat{\vx_i}$ to denote $\vx$ with the $i$th coordinate deleted and $\widehat{\vx_{i,j}}$ to denote $\vx$ with the $i$th and $j$th coordinates deleted.

\begin{theorem}[{Raichev and Wilson~\cite[Theorem 3.2]{raichev:asymptotics-improvements-smooth}}]
	\label{theorem:rw-compute}
	Let $d \geq 2$. If $\vc \in \VV$ is strictly minimal, smooth with $c_d\partial_d H(\vc) \neq 0$, critical and isolated, and nondegenerate, then for all $N \in \N$,
	\[
		F_{n\vone} = \vc^{-n\vone}\left[\left(\left(2\pi n\right)^{d-1}\det \widetilde{g}''(0)\right)^{-1/2} \sum_{k < N} n^{-k}L_k(\widetilde{u}_0,\widetilde{g}) + O\left(n^{-(d-1)/2-N}\right)\right]
	\]
	as $n \to \infty$.
\end{theorem}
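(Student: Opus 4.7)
The plan is to prove this via the multivariate Cauchy integral formula combined with a multidimensional saddle-point analysis, following the broad strategy of the Pemantle--Wilson programme for analytic combinatorics in several variables. The starting point is the representation
\[
    F_{n\vone} = \frac{1}{(2\pi i)^d} \oint_{\vT(\vy)} \frac{G(\vx)}{H(\vx)} \, \frac{dx_1 \cdots dx_d}{(x_1 \cdots x_d)^{n+1}},
\]
valid for any $\vy$ in the domain of convergence of $F$. Strict minimality of $\vc$ allows the torus $\vT(\vy)$ to be inflated toward $\vT(\vc)$ so that the contour touches $\VV$ only at $\vc$. The first task is localization: since $\vc$ is the only zero of $H$ on $\vT(\vc)$ and $H$ is analytic, a compactness argument shows that the portion of the integral away from any fixed neighborhood $U$ of $\vc$ contributes at most $\vc^{-n\vone}$ times an exponentially small error, so we may restrict attention to the piece over $U$.

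Next I would use the smoothness hypothesis $c_d \partial_d H(\vc) \neq 0$ to invoke the implicit function theorem, writing $\VV \cap U$ as the graph $x_d = \rho(\widehat{\vx_d})$ of an analytic function of the remaining variables. Applying the residue theorem in the $x_d$ variable converts the $d$-fold contour integral into a $(d-1)$-fold integral over a small torus around $\widehat{\vc_d}$. After parameterizing $x_j = c_j e^{i\theta_j}$ for $j < d$ and collecting the exponents, the integrand takes the form $\vc^{-n\vone}\, u(\boldsymbol{\theta})\, e^{-n g(\boldsymbol{\theta})}$, where $u$ is a smooth amplitude built from $G$, $\partial_d H$, and $\rho$, and $g$ is an analytic phase. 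The criticality hypothesis is precisely the vanishing $\nabla g(\vzero) = \vzero$, and the isolation assumption ensures that $\vzero$ is the unique stationary point in $U$.

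The final step is the standard multivariate Laplace expansion: Taylor-expanding $g$ and $u$ about $\vzero$ and rescaling $\boldsymbol{\theta} = \boldsymbol{\tau}/\sqrt{n}$ reduces the local integral to a Gaussian integral with polynomial corrections. Nondegeneracy of the Hessian $\widetilde{g}''(\vzero)$ guarantees that the leading Gaussian evaluates to $(2\pi n)^{-(d-1)/2}\, (\det \widetilde{g}''(\vzero))^{-1/2}$, and the higher-order terms $L_k(\widetilde{u}_0, \widetilde{g})$ arise as universal differential operators applied to the Taylor data of $u$ and to the cubic-and-higher Taylor coefficients of $g$, in the usual way. Truncating the expansion at order $N$ yields the claimed remainder $O(n^{-(d-1)/2 - N})$.

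The main technical obstacle is rigorously justifying the three contour manipulations---inflation to $\vT(\vc)$, localization to $U$, and passage to the $(d-1)$-dimensional residue integral---under the minimal hypotheses given. In particular, the exponential decay of the tail is subtle: without strict minimality and isolation, other points on $\vT(\vc)$ could supply equally large stationary-phase contributions, and any degeneracy of the Hessian would spoil the $n^{-(d-1)/2}$ rate entirely. The most delicate bookkeeping lies in identifying the operators $L_k$ precisely from the residue pull-back and in controlling the saddle-point error uniformly in $N$.
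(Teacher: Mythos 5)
This statement is quoted from Raichev and Wilson and is not proved in the paper, so there is no internal proof to compare against; measured against the argument in the cited source, your outline (Cauchy integral, contour inflation and localization justified by strict minimality, a residue in $x_d$ enabled by $c_d\partial_d H(\vc)\neq 0$, and a stationary-phase/Laplace expansion at the nondegenerate critical point yielding the $L_k$ via the standard H\"ormander-type formula) is essentially the same approach Raichev and Wilson take. Your sketch is a correct high-level account of that proof, with the genuinely hard work residing exactly where you flag it: the rigorous contour deformations and the precise identification of the operators $L_k$.
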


The quantities $\det \widetilde{g}''(0)$, $L_k$, $\widetilde{u}_0$, and $\widetilde{g}$ will be defined later, as needed. For now it suffices to remark that they can all be computed and hence Theorem~\ref{theorem:rw-compute} permits the computation of the asymptotic behavior of the main diagonal to arbitrary precision. 

There are a number of hypotheses that must be verified to apply Theorem~\ref{theorem:rw-compute}. We have already stated what it means for a point in $\VV$ to be strictly minimal. Further, $\vc \in \VV$ is \emph{smooth} if $\partial_i H(\vc) \neq 0$ for some $i$, $\vc \in \VV$ is \emph{critical} if it's smooth and 
\[
	c_1\partial_1H(\vc) = c_2\partial_2H(\vc) = \cdots = c_d\partial_d H(\vc),
\]
$\vc$ is \emph{isolated} if there is a neighborhood around $\vc$ in which it is the only critical point, and $\vc$ is \emph{nondegenerate} if $\det \widetilde{g}''(0) \neq 0$.

We claim that
\[
	\vc = \bigg(\underbrace{\frac{1}{d-1}, \ldots, \frac{1}{d-1}}_{d\text{ times}}\bigg),
\]
satisfies the hypotheses of Theorem~\ref{theorem:rw-compute} and therefore is the sole contributing point to the asymptotic behavior of $C_d(n)$. The verification of this claim relies on tedious computation using several properties of the symmetric functions $e_i(\vx)$; these will be stated as they are required. To simplify notation, denote
\begin{align*}
	P(\vx) &= \prod_{i=1}^d (1-x_i),\\
	S(\vx) &= 1 - \ds\sum_{i=2}^d (i-1)e_i(\vx).
\end{align*}

\begin{proposition}
	The point $\vc$ lies in the variety $\VV$.	
\end{proposition}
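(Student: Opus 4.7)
The plan is direct: factor $H(\vc) = P(\vc)\,S(\vc)$ and show that the second factor vanishes while the first does not. Since each coordinate of $\vc$ equals $1/(d-1)$ and $d \geq 3$, we have $P(\vc) = ((d-2)/(d-1))^d \neq 0$, so everything reduces to verifying $S(\vc) = 0$.

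Because $\vc$ has all coordinates equal, $e_i(\vc) = \binom{d}{i}(d-1)^{-i}$, so we must show
\[
	\sum_{i=2}^d (i-1)\binom{d}{i}\left(\frac{1}{d-1}\right)^{\!i} = 1.
\]
I would evaluate the left-hand side by introducing the auxiliary polynomial identity
\[
	\sum_{i=2}^d (i-1)\binom{d}{i} x^i = dx(1+x)^{d-1} - (1+x)^d + 1,
\]
which follows from splitting $(i-1) = i - 1$ and applying the standard formulas $\sum_i \binom{d}{i}x^i = (1+x)^d$ and $\sum_i i\binom{d}{i}x^i = dx(1+x)^{d-1}$. Specializing at $x = 1/(d-1)$ gives $1+x = d/(d-1)$, and both $dx(1+x)^{d-1}$ and $(1+x)^d$ equal $d^d/(d-1)^d$, so the right-hand side collapses to $1$, proving $S(\vc) = 0$.

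There is no genuine obstacle here beyond the bookkeeping of the binomial identity; the argument is a routine computation. This same identity will be reused in the subsequent propositions (for smoothness, criticality, and minimality), so it is worth stating once and for all in the required generality.
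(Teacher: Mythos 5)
Your proof is correct, and it follows the same reduction as the paper: evaluate $e_i(\vc) = \binom{d}{i}(d-1)^{-i}$ and show the resulting binomial sum equals $1$. The one genuine difference is how the final identity is established. The paper outsources the verification of $\sum_{i=2}^d (i-1)\binom{d}{i}(d-1)^{-i} = 1$ to Maple (citing Zeilberger's holonomic-systems algorithm), whereas you prove the polynomial identity
\[
	\sum_{i=2}^d (i-1)\binom{d}{i} x^i = dx(1+x)^{d-1} - (1+x)^d + 1
\]
by hand from $\sum_i \binom{d}{i}x^i = (1+x)^d$ and its derivative, then specialize at $x = 1/(d-1)$ so that the two terms $dx(1+x)^{d-1}$ and $(1+x)^d$ both become $d^d/(d-1)^d$ and cancel. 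Your version is self-contained and human-checkable, which is a real improvement in readability; the paper's version is shorter on the page but relies on a CAS. Two minor remarks: the observation that $P(\vc) \neq 0$ is unnecessary for this proposition, since $H = P\cdot S$ and $S(\vc)=0$ alone gives $H(\vc)=0$; and your closing claim that ``this same identity will be reused'' in the later propositions is optimistic --- the subsequent propositions use the partial-derivative identity $\partial_{x_j} e_i = e_{i-1}(\widehat{\vx_j})$ and the recursion $e_i(\vx) = x_1 e_{i-1}(\widehat{\vx_1}) + e_i(\widehat{\vx_1})$ rather than this generating-function identity, though the same circle of binomial facts underlies all of them.
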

\begin{proof}
	It suffices to show that $S(\vc) = 0$. Observe that
	\[
		e_i(k\vone) = {d \choose i}k^i,
	\]
	and therefore
	\[
		S(\vc) = 1 - \ds\sum_{i=2}^d(i-1)e_i(\vc) = 1 - \ds\sum_{i=2}^d (i-1){d \choose i}\left(\frac{1}{d-1}\right)^i = 0,
	\]
	the final equality being verified by the computer algebra system Maple (which itself employs an algorithm of Zeilberger~\cite{zeilberger:holonomic-systems}).
\end{proof}

\begin{proposition}
	The point $\vc$ is strictly minimal in $\VV$.
\end{proposition}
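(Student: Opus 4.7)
The plan is to exploit the factorization $H = P \cdot S$ and treat each factor separately. The component $\{P = 0\}$ is trivial: any point there has $x_j = 1$ for some $j$, hence $|x_j| = 1 > 1/(d-1)$ when $d \geq 3$, so such a point is not even in $\vD(\vc)$. All the work therefore concentrates on the component $\{S = 0\}$.

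For a point $\vx \in \vD(\vc)$ with $S(\vx) = 0$, I would set $y_j = |x_j| \leq 1/(d-1)$ and chain two triangle inequalities with the monotonicity of $e_i$ on the nonnegative orthant:
\[
1 = \left|\sum_{i=2}^d (i-1)\, e_i(\vx)\right| \leq \sum_{i=2}^d (i-1)\, |e_i(\vx)| \leq \sum_{i=2}^d (i-1)\, e_i(y_1, \ldots, y_d) \leq \sum_{i=2}^d (i-1)\, e_i(\vc) = 1,
\]
where the final equality is the identity from the previous proposition. Every inequality must therefore be an equality. From the rightmost one, specialized to the top-degree term $e_d(y) = y_1 \cdots y_d \leq (1/(d-1))^d$, I deduce $y_j = 1/(d-1)$ for each $j$, which establishes minimality.

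For strict minimality I unpack the remaining equalities. The middle equality $|e_2(\vx)| = e_2(y)$ says that the products $x_j x_k$ ($j < k$) all lie on a common ray from the origin, and for $d \geq 3$ this forces $\arg x_j$ to be the same for every $j$. Hence $\vx = (1/(d-1))\,e^{\sqrt{-1}\,\theta}\vone$ for a single angle $\theta$, so each $(i-1)\,e_i(\vx)$ is a positive real multiple of $e^{\sqrt{-1}\,i\theta}$. The outermost equality then forces all these complex numbers to share a common argument, which must be $0$ since they sum to $1$; thus $i\theta \equiv 0 \pmod{2\pi}$ for every $i = 2, \ldots, d$, and already comparing $i = 2$ with $i = 3$ pins $\theta \equiv 0 \pmod{2\pi}$. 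Hence $\vx = \vc$.

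I do not expect any serious obstacle beyond careful bookkeeping of the equality cases in the chain above. Minimality comes out essentially for free; strict minimality takes a little more work, using the $i = 2$ and $i = 3$ terms together with $d \geq 3$ to eliminate the possibility of a nontrivial phase rotation on the torus.
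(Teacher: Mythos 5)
Your argument is correct, and it takes a genuinely different route from the paper. The paper outsources the heavy lifting to Proposition~3.17 of Pemantle and Wilson (if $H = 1 - P$ with $P$ aperiodic, then every minimal point is strictly minimal and lies in the positive orthant), invoking it twice: once to force a hypothetical competing minimal point into the positive orthant before running the symmetric-function estimate, and once more at the end to upgrade minimality of $\vc$ to strict minimality. You instead give a self-contained elementary argument: the single chain
\[
1 = \Big|\sum_{i=2}^d (i-1) e_i(\vx)\Big| \leq \sum_{i=2}^d (i-1)|e_i(\vx)| \leq \sum_{i=2}^d (i-1) e_i(|x_1|,\ldots,|x_d|) \leq \sum_{i=2}^d (i-1) e_i(\vc) = 1
\]
forces every link to be an equality; the rightmost equality pins $|x_j| = 1/(d-1)$ (via the top term $e_d$), giving minimality, and the equality cases in the two complex triangle inequalities (reading off $e_2$ to align the phases $\theta_j$, then comparing the arguments $2\theta$ and $3\theta$) pin $\vx = \vc$ on the torus, giving strict minimality. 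Your approach avoids the aperiodicity apparatus entirely and resolves the two claims (minimality and strictness) by one uniform equality-case analysis, at the modest cost of a few lines of careful bookkeeping; the paper's route is shorter on the page but less transparent, since the work is hidden inside the cited proposition. One small presentational note: when you invoke the rightmost equality you should say explicitly that each nonnegative difference $(i-1)\bigl(e_i(\vc)-e_i(|\vx|)\bigr)$ must vanish individually, and similarly for the middle link, before specializing to $i=d$ and $i=2,3$; this is what you are implicitly using, and making it explicit closes the only gap a picky reader could point to.
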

\begin{proof}
	The variety $\VV$ can be written as the union
	\[
		\VV = \left\{ \vx : x_i = 1\text{ for some $i$} \right\} \; \cup \; \left\{ \vx : S(\vx) = 0 \right\}.
	\]
	This union is not disjoint.
	
	Suppose that $\vc$ were not minimal. Then, there would exist a minimal point $\vy \in \VV \cap D(\vc)$ different from $\vc$. Since $|y_i| \leq 1/(d-1)$ for all $i$, we must have $S(\vy) = 0$.
	
	 Consider the variety $\VV'$ defined by $S(\vy) = 0$. We say that a polynomial $P$ is \emph{aperiodic} if the set of integer combinations of the exponent vectors of its monomials is all of $\Z^d$. For example, $x_1 + x_1^2x_2$ is aperiodic because the the $\Z$-span of $\{(1, 0), (2, 1)\}$ is $\Z^2$, while $x_1^2 + x_2^2$ is not aperiodic. 	
	
	 Proposition 3.17 from Pemantle and Wilson~\cite{pemantle:twenty-combinatorial-examples} states that if $H = 1 - P$ for an aperiodic polynomial $P$, then every minimal point of the variety defined by $H=0$ is strictly minimal and lies in the positive orthant.\footnote{The proof of Proposition 3.17 in~\cite{pemantle:twenty-combinatorial-examples} does not rely on their Assumption 3.6.} Applying this to $S$ and $\VV'$, we conclude that $0 < y_i \leq 1/(d-1)$ for all $i$.
	 
	 It follows that
	 \[
	 	\ds\sum_{i=2}^d (i-1)e_i(\vy) \leq \sum_{i=2}^d (i-1){d \choose i}\left(\frac{1}{d-1}\right)^i = 1,
	 \]
	 with equality only when $\vy = \vc$. Therefore $\vc$ is minimal, and again by Proposition 3.17 from~\cite{pemantle:twenty-combinatorial-examples}, $\vc$ is in fact strictly minimal.
\end{proof}

\begin{proposition}
	\label{prop:c-smooth}
	The point $\vc$ is smooth.
\end{proposition}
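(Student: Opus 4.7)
The plan is to use the product rule on $H = P \cdot S$ and exploit the vanishing $S(\vc) = 0$ established in the previous proposition. Indeed,
\[
    \partial_i H(\vc) = \partial_i P(\vc)\, S(\vc) + P(\vc)\, \partial_i S(\vc) = P(\vc)\, \partial_i S(\vc),
\]
and since $P(\vc) = \bigl((d-2)/(d-1)\bigr)^d$ is nonzero for $d \geq 3$, it suffices to show that $\partial_i S(\vc) \neq 0$ for some (equivalently, every) coordinate $i$.

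By the symmetry of $\vc$, I would pick any $i$ and use the standard identity $\partial_i e_k(\vx) = e_{k-1}(\widehat{\vx_i})$ to write
\[
    \partial_i S(\vc) = -\sum_{k=2}^d (k-1)\, e_{k-1}\bigl(\widehat{\vc_i}\bigr) = -\sum_{k=2}^d (k-1) \binom{d-1}{k-1}\left(\frac{1}{d-1}\right)^{k-1},
\]
using the same formula $e_j(k\vone) = \binom{d}{j}k^j$ (now applied to the $(d-1)$-dimensional vector $\widehat{\vc_i}$) that appeared in the previous proposition. Every summand is strictly positive, which already proves $\partial_i S(\vc) < 0$ and hence smoothness; for later use one can pull the value into closed form by reindexing $j = k-1$ and applying $\sum_{j=1}^{n} j\binom{n}{j}x^j = nx(1+x)^{n-1}$ with $n = d-1$ and $x = 1/(d-1)$, which collapses the sum to $\bigl(d/(d-1)\bigr)^{d-2}$.

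Combining, $\partial_i H(\vc) = -\bigl((d-2)/(d-1)\bigr)^d \bigl(d/(d-1)\bigr)^{d-2}$, which is nonzero and in fact the same for every $i$ by symmetry; the latter observation will be convenient in the upcoming verification of the criticality hypothesis, where one needs the common value of $c_i \partial_i H(\vc)$. There is no real obstacle here since every step is a short calculation; the only mild subtlety is remembering to use $S(\vc)=0$ at the outset so that the messier $\partial_i P(\vc) \cdot S(\vc)$ term drops out and the computation reduces to a single elementary symmetric function evaluation.
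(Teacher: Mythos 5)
Your proof is correct and takes essentially the same route as the paper: apply the product rule to $H = P\cdot S$, discard the $(\partial_i P)S$ term using $S(\vc)=0$, and evaluate $P(\vc)\,\partial_i S(\vc)$ via the elementary symmetric function derivative identity, arriving at the same closed form $-\bigl((d-2)/(d-1)\bigr)^d\bigl(d/(d-1)\bigr)^{d-2}$. The positivity-of-summands observation is a nice shortcut that the paper omits but is not a genuinely different argument.
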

\begin{proof}
	Observe first that
	\[
		\frac{\partial}{\partial x_j} e_i(\vx) = e_{i-1}(\widehat{\vx_j}).
	\]
	Therefore,
	\begin{align*}
		\partial_d H(\vc) &= (\partial_d P(\vc))S(\vc) + P(\vc)(\partial_d S(\vc))\\
		&= -\left(\prod_{i=1}^{d-1} (1-c_i)\right)S(\vc) + P(\vc)\left(-\ds\sum_{i=2}^d(i-1)e_{i-1}(\widehat{\vc_d})\right)\\
		&= -\left(\frac{d-2}{d-1}\right)^{d}\left(\frac{d}{d-1}\right)^{d-2}\\
		&= -\frac{(d-2)^{d}d^{d-2}}{(d-1)^{2d-2}} \neq 0.
	\end{align*}
	Thus $\vc$ is a smooth point.
\end{proof}

\begin{proposition}
	The point $\vc$ is critical. 
\end{proposition}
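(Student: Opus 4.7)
The plan is to exploit the $S_d$-symmetry of $H$ in its variables together with the obvious $S_d$-symmetry of the point $\vc$, which makes the criticality condition essentially free. The key observation is that both factors of $H = P \cdot S$ are symmetric polynomials in $x_1,\ldots,x_d$: the factor $P(\vx) = \prod_{i=1}^d(1-x_i)$ is manifestly symmetric, and $S(\vx) = 1 - \sum_{i=2}^d(i-1)e_i(\vx)$ is a polynomial in the elementary symmetric functions, hence also symmetric. Therefore $H(x_1,\ldots,x_d) = H(x_{\sigma(1)},\ldots,x_{\sigma(d)})$ for every permutation $\sigma \in S_d$.

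Differentiating the identity $H(\vx) = H(\tau \cdot \vx)$ for the transposition $\tau = (i\,\,j)$ and applying the chain rule gives $\partial_j H(\vx) = (\partial_i H)(\tau \cdot \vx)$. Evaluated at $\vc$, whose coordinates are all equal to $1/(d-1)$ and which is therefore fixed by every permutation, this yields $\partial_i H(\vc) = \partial_j H(\vc)$ for all $i,j$. Since also $c_i = c_j$ for all $i,j$, the criticality condition
\[
    c_1 \partial_1 H(\vc) = c_2 \partial_2 H(\vc) = \cdots = c_d \partial_d H(\vc)
\]
is immediate. Combined with Proposition~\ref{prop:c-smooth}, which establishes smoothness, this shows that $\vc$ is critical, with common value of $c_i \partial_i H(\vc)$ equal to $-(d-2)^d d^{d-2}/(d-1)^{2d-1}$ by the computation already performed there.

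There is no real obstacle: the argument is purely symmetry, and the only thing worth flagging is that one must separately verify symmetry of both $P$ and $S$ (so that the factorization of $H$ does not accidentally hide an asymmetry), which is visible by inspection.
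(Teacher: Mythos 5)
Your argument is the same one the paper uses: criticality follows immediately from the $S_d$-symmetry of $H$ together with the fact that $\vc$ is a fixed point of every permutation, so that $c_j\partial_j H(\vc)$ is independent of $j$. You simply spell out the chain-rule step that the paper calls ``trivially true,'' which is a reasonable amount of extra detail but not a different route.
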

\begin{proof}
	To prove the criticality of $\vc$ we must verify that $c_j \partial_j H(\vc) = c_k\partial_kH(\vc)$ for all $j,k$. As both $\vc$ and $H$ are symmetric, this is trivially true.
\end{proof}

\begin{proposition}
	The point $\vc$ is isolated.
\end{proposition}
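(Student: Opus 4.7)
The plan is to use the inverse function theorem. Since $P(\vc) = ((d-2)/(d-1))^d \neq 0$ for $d \geq 3$, the polynomial $P$ is nonzero on a neighborhood of $\vc$, so there $H = 0$ is equivalent to $S = 0$; moreover the identity $\partial_i H = (\partial_i P)\, S + P\, \partial_i S$ shows that on $S = 0$ the criticality condition $x_1 \partial_1 H = \cdots = x_d \partial_d H$ simplifies to $x_1 \partial_1 S = \cdots = x_d \partial_d S$. Smoothness being automatic near the smooth point $\vc$, the critical points in a neighborhood of $\vc$ are precisely the zeros of the map $\Phi : \C^d \to \C^d$ given by
\[
	\Phi(\vx) = \bigl(\, S(\vx),\; x_1 \partial_1 S(\vx) - x_2 \partial_2 S(\vx),\; \ldots,\; x_1 \partial_1 S(\vx) - x_d \partial_d S(\vx)\,\bigr),
\]
and it suffices to show that the Jacobian $J_\Phi(\vc)$ is nonsingular.

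Using $\partial_j e_i(\vx) = e_{i-1}(\widehat{\vx_j})$ together with $e_k(\widehat{\vc_j}) = \binom{d-1}{k}(d-1)^{-k}$ and $e_k(\widehat{\vc_{i,j}}) = \binom{d-2}{k}(d-1)^{-k}$, the same style of binomial-sum manipulation as in Proposition~\ref{prop:c-smooth} yields
\[
	\partial_j S(\vc) = -\left(\tfrac{d}{d-1}\right)^{d-2} =: D \qquad\text{and}\qquad \partial_i \partial_j S(\vc) = \begin{cases} 0 & \text{if } i = j, \\ A & \text{if } i \neq j,\end{cases}
\]
for an explicit constant $A = -2\, d^{d-3}/(d-1)^{d-3} \neq 0$. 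Substituting into $\partial_k(x_i \partial_i S) = \delta_{ik}\,\partial_i S + x_i\, \partial_k \partial_i S$ and evaluating at $\vc$, one sees that $J_\Phi(\vc)$ has first row $(D, D, \ldots, D)$ and, for each $i \geq 2$, $i$th row equal to $(E, 0, \ldots, 0, -E, 0, \ldots, 0)$ with $-E$ in column $i$, where $E := D - A/(d-1)$.

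A short cofactor expansion (or induction on $d$) then evaluates this determinant to $(-1)^{d-1}\, d\, D\, E^{d-1}$. The factor $D$ is manifestly nonzero, and a brief simplification gives $E = d^{d-3}(2-d)/(d-1)^{d-2}$, which is also nonzero for $d \geq 3$. Hence $J_\Phi(\vc)$ is invertible, and the inverse function theorem produces a neighborhood of $\vc$ on which $\Phi$ has no zeros other than $\vc$, establishing that $\vc$ is an isolated critical point. The only mildly delicate step is the symmetric-function bookkeeping needed to compute $D$ and $A$ and thereby verify $E \neq 0$; everything else is linear algebra and a single application of the inverse function theorem.
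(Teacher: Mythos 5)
Your proof is correct, but it takes a genuinely different route from the paper's. You reduce isolatedness to nonsingularity of the Jacobian of the map $\Phi = (S,\; x_1\partial_1 S - x_2\partial_2 S,\; \ldots,\; x_1\partial_1 S - x_d\partial_d S)$ at $\vc$, compute that Jacobian explicitly (I checked: $D = -(d/(d-1))^{d-2}$, $A = -2d^{d-3}/(d-1)^{d-3}$, and $E = d^{d-3}(2-d)/(d-1)^{d-2}$ are all correct, as is the determinant $(-1)^{d-1}dDE^{d-1}$), and invoke the inverse function theorem. The paper instead exploits symmetry more directly: starting from a single criticality equation $y_1\partial_1 H(\vy) = y_d\partial_d H(\vy)$ on $S = 0$, it uses the recursion $e_i(\vx) = x_1 e_{i-1}(\widehat{\vx_1}) + e_i(\widehat{\vx_1})$ to cancel terms and conclude $y_1 = y_d$, hence by symmetry $\vy = y\vone$; then the single identity $S(y\vone) = (y+1)^{d-1}((1-d)y+1)$ forces $\vy = \vc$. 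The paper's argument is lighter on computation and gives a global structural fact (nearby critical points are forced onto the diagonal), making the final step a one-variable root-finding problem. Your argument is more of a black-box local analysis: heavier on symmetric-function bookkeeping for the Hessian of $S$, but it packages the entire argument into one invertibility check and would transfer more readily to a non-symmetric perturbation of the problem, where the paper's reduction to the diagonal would not be available.
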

\begin{proof}
	Let $\epsilon > 0$ be small and let $B$ be the $\epsilon$-neighborhood around $\vc$. Suppose $\vy \in B$ is critical. It must then be true that
	\[
		y_1 \partial_1 H(\vy) = y_d \partial_d H(\vy).
	\]
	Using the calculation performed in Proposition~\ref{prop:c-smooth} along with the fact that $S(\vy) = 0$, it follows that
	\[
		y_1 \sum_{i=2}^d (i-1)e_{i-1}(\widehat{\vy_1}) = y_d \sum_{i=2}^d (i-1)e_{i-1}(\widehat{\vy_d}).
	\]
	
	Using the identity
	\[
		e_i(x_1, \ldots, x_d) = x_1e_{i-1}(x_2, \ldots, x_d) + e_i(x_2, \ldots, x_d)
	\]
	(with the convention that $e_d(x_2, \ldots, x_d) = 0$), the equality becomes
	\[
		y_1 \sum_{i=2}^d (i-1)\left(y_de_{i-1}(\widehat{\vy_{1,d}}) + e_i(\widehat{\vy_{1,d}})\right) = y_d \sum_{i=2}^d (i-1)\left(y_1e_{i-1}(\widehat{\vy_{1,d}}) + e_i(\widehat{\vy_{1,d}})\right).
	\]
	By canceling like terms, we see
	\[
		y_1 \sum_{i=2}^d (i-1)e_i(\widehat{\vy_{1,d}}) = y_d \sum_{i=2}^d (i-1)e_i(\widehat{\vy_{1,d}}).
	\]
	Since the function
	\[
		U(\vx) = \ds\sum_{i=2}^d (i-1)e_i(\widehat{\vx_{1,d}})
	\]
	is nonzero and continuous at $\vx = \vc$, $\epsilon$ can be chosen small enough to ensure that $U(\vy) \neq 0$. Dividing both sides by $U(\vy)$ yields
	\[
		y_1 = y_d,
	\]
	and symmetry implies that $\vy$ has the form $y\vone$ for some $y \in \C$. 
	
	Noting that 
	\[
		H(y\vone) = 1 - \ds\sum_{i=2}^d (i-1){d \choose i}y^i = (y+1)^{d-1}((1-d)y+1),
	\]
	we find that $\vy = \vc$. Therefore, $\vc$ is isolated.
\end{proof}

The last hypothesis to check is that $\vc$ is nondegenerate. This amounts to checking that $\det \widetilde{g}''(0) \neq 0$. In non-symmetric cases, the definition of $\widetilde{g}''(0)$ is quite cumbersome. Thankfully, Proposition 4.2 of~\cite{raichev:asymptotics-improvements-smooth} proves that in cases where $H$ and $\vc$ are symmetric,
\[
	\det \widetilde{g}''(0) = dq^{d-1},
\]
where
\[
	q = 1 + \frac{c_1}{\partial_d H(\vc)}\left(\partial_{dd}H(\vc) - \partial_{1d}H(\vc)\right).
\]

\begin{proposition}
	The point $\vc$ is nondegenerate.
\end{proposition}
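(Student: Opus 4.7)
The plan is to use the symmetric formula $\det \widetilde{g}''(0) = d q^{d-1}$ and show that $q \neq 0$ by a direct computation. Since $d \geq 3$, it suffices to verify that
\[
q = 1 + \frac{c_1}{\partial_d H(\vc)}\bigl(\partial_{dd}H(\vc) - \partial_{1d}H(\vc)\bigr)
\]
is nonzero. The values $c_1 = 1/(d-1)$ and $\partial_d H(\vc) = -(d-2)^d d^{d-2}/(d-1)^{2d-2}$ are already in hand from Proposition~\ref{prop:c-smooth}, so the real work lies in computing the difference of second partial derivatives at $\vc$.

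First I would exploit the factorization $H = P \cdot S$. Applying Leibniz to each second derivative and using the crucial fact $S(\vc) = 0$, the terms containing $S(\vc)$ vanish. Then, using the symmetry $\partial_1 P(\vc) = \partial_d P(\vc)$ and $\partial_1 S(\vc) = \partial_d S(\vc)$, the mixed terms $\partial_d P \cdot \partial_d S$ appear with coefficient $2$ in both $\partial_{dd}H(\vc)$ and $\partial_{1d}H(\vc)$ and therefore cancel in the difference. This collapses the computation to
\[
\partial_{dd}H(\vc) - \partial_{1d}H(\vc) = P(\vc)\bigl(\partial_{dd}S(\vc) - \partial_{1d}S(\vc)\bigr),
\]
a substantial simplification.

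Next I would compute the two second derivatives of $S$ using the identity $\partial_j e_i(\vx) = e_{i-1}(\widehat{\vx_j})$ already used in Proposition~\ref{prop:c-smooth}. Differentiating again in $x_d$ annihilates the $\widehat{\vx_d}$ argument, so $\partial_{dd} S \equiv 0$. On the other hand,
\[
\partial_{1d}S(\vx) = -\sum_{i=2}^d (i-1)\,e_{i-2}(\widehat{\vx_{1,d}}),
\]
which at $\vc$ evaluates (since $\widehat{\vc_{1,d}}$ is $d-2$ copies of $1/(d-1)$) to a sum of the form $-\sum_{k=0}^{d-2}(k+1)\binom{d-2}{k}(d-1)^{-k}$. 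Splitting this into $\sum k\binom{d-2}{k}(d-1)^{-k}$ and $\sum \binom{d-2}{k}(d-1)^{-k}$ and applying the binomial theorem (with the absorption identity $k\binom{d-2}{k} = (d-2)\binom{d-3}{k-1}$ for the first piece) reduces it to a clean closed form in terms of $d$.

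Finally I would assemble the pieces. The main obstacle is not conceptual but bookkeeping: tracking the exact powers of $d-1$, $d-2$, and $d$ through the cancellations. After substituting the closed forms for $P(\vc)$, $\partial_d H(\vc)$, and $\partial_{1d} S(\vc)$ into the formula for $q$, the powers of $(d-1)$ and of $(d-2)$ cancel completely, leaving a remarkably simple expression of the form $q = (d-2)/d$. Since $d \geq 3$, we have $q \neq 0$, hence $\det \widetilde{g}''(0) = d q^{d-1} = (d-2)^{d-1}/d^{d-2} \neq 0$, establishing that $\vc$ is nondegenerate.
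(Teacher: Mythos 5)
Your proof is correct and follows essentially the same route as the paper: verify via the symmetric formula $\det \widetilde{g}''(0) = dq^{d-1}$ that $q = (d-2)/d \neq 0$ by computing the relevant derivatives of $H = P\cdot S$ at $\vc$ with Leibniz and the identity $\partial_j e_i = e_{i-1}(\widehat{\vx_j})$. Your observation that $S(\vc)=0$, $\partial_{dd}P\equiv 0$, and the symmetric cross terms $2(\partial_d P)(\partial_d S)$ cancel in the difference so that $\partial_{dd}H(\vc)-\partial_{1d}H(\vc) = -P(\vc)\,\partial_{1d}S(\vc)$ is a tidy streamlining of the paper's more laborious separate closed-form computations of $\partial_{dd}H(\vc)$ and $\partial_{1d}H(\vc)$, but it is not a conceptually different argument.
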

\begin{proof}
	We showed in Proposition~\ref{prop:c-smooth} that
	\[
		\partial_d H(\vx) = -\left(\prod_{i=1}^{d-1} (1-x_i)\right)S(\vx) + P(\vx)\left(-\ds\sum_{i=2}^d(i-1)e_{i-1}(\widehat{\vx_d})\right)\\
	\]
	and
	\[
		\partial_d H(\vc) = -\frac{(d-2)^{d}d^{d-2}}{(d-1)^{2d-2}}.
	\]
	Additionally, noting that the first term in the left summand and the second term in the right summand are independent of $x_d$, we have
	\begin{align*}
		\partial_{dd} H(\vx) &= -\left(\prod_{i=1}^{d-1} (1-x_i)\right) \partial_d S(\vx) + (\partial_d P(\vx))\left(-\ds\sum_{i=2}^d(i-1)e_{i-1}(\widehat{\vx_d})\right)\\
		&= 2\left(\prod_{i=1}^{d-1} (1-x_i)\right)\left(\ds\sum_{i=2}^d(i-1)e_{i-1}(\widehat{\vx_d})\right),
	\end{align*}
	proving
	\[
		\partial_{dd} H(\vc) = 2\left(\frac{d-2}{d-1}\right)^{d-1}\left(\frac{d}{d-1}\right)^{d-2} = \frac{2d^{d-2}(d-2)^{d-1}}{(d-1)^{2d-3}}.
	\]
	Furthermore,
	\begin{align*}
		\partial_{1d} H(\vx) &= (\partial_{1d} P(\vx))S(\vx) + (\partial_d P(\vx))(\partial_1 S(\vx)) + (\partial_1 P(\vx))(\partial_d S(\vx)) + P(\vx)(\partial_{1d} S(\vx))\\
		&= \left(\prod_{i=2}^{d-1} (1-x_i)\right)\left(S(\vx) + (1-x_1)\left(\sum_{i=2}^d(i-1)e_{i-1}(\widehat{\vx_1})\right)\right.\\
		& \qquad\qquad\qquad\qquad\qquad\qquad + (1-x_d)\left(\sum_{i=2}^d(i-1)e_{i-1}(\widehat{\vx_d})\right)\\
		& \qquad\qquad\qquad\qquad\qquad\qquad \left.- (1-x_1)(1-x_d)\left(\sum_{i=2}^d(i-1)e_{i-2}(\widehat{\vx_{1,d}})\right)\right),
	\end{align*}
	proving,
	\[
		\partial_{1d} H(\vc) = \left(\frac{d-2}{d-1}\right)^{d-1}\left(2\left(\frac{d}{d-1}\right)^{d-2} - 2\left(\frac{d-2}{d-1}\right)\left(\frac{d}{d-1}\right)^{d-3}\right) = \frac{4d^{d-3}(d-2)^{d-1}}{(d-1)^{2d-3}}.
	\]
	
	We can now compute $q$:
	\[
		q = 1 + \frac{c_1}{\partial_d H(\vc)}\left(\partial_{dd}H(\vc) - \partial_{1d}H(\vc)\right) = \frac{d-2}{d}.
	\]
	Finally,
	\[
		\det \widetilde{g}''(0) = dq^{d-1} = d\left(\frac{d-2}{d}\right)^{d-1} = \frac{(d-2)^{d-1}}{d^{d-2}} \neq 0.
	\]
	Hence, $\vc$ is nondegenerate.
\end{proof}

Having verified the hypotheses of Theorem~\ref{theorem:rw-compute} for $\vc$, we will now define and compute several of the quantities in its conclusion. To find the first-order asymptotic behavior, we consider the case $N=1$ in the theorem. Applying the appropriate simplifications to the definitions of $L_k$, $\widetilde{u}_0$, and $\widetilde{g}$ in~\cite{raichev:asymptotics-improvements-smooth}, we find that
	\[
		L_0(\widetilde{u}_0, \widetilde{g}) = \frac{G(\vc)}{-c_d\partial_dH(\vc)} = \left(\frac{1}{d-1}\right)^d\left(\frac{(d-1)^{2d-1}}{d^{d-2}(d-2)^{d}}\right) = \frac{(d-1)^{d-1}}{d^{d-2}(d-2)^{d}}.
	\]
	
Assembling all computed quantities into the conclusion of Theorem~\ref{theorem:rw-compute} yields
\[
	C_d(n) = \frac{L_0(\widetilde{u}_0, \widetilde{g})}{\sqrt{(2\pi)^{d-1} \det \widetilde{g}''(0)}} ((d-1)^d)^n n^{(1-d)/2}\left( 1 + O\left(\frac{1}{n}\right)\right)
\]
and so
\[
	C_d(n) = \frac{(d-1)^{d-1}}{(2\pi)^{(d-1)/2}d^{(d-2)/2}(d-2)^{(3d-1)/2}} ((d-1)^d)^n n^{(1-d)/2}\left( 1 + O\left(\frac{1}{n}\right)\right),
\]
proving Theorem~\ref{theorem:asymp}.  

The computation of the asymptotic behavior of off-diagonal sequences can also be performed using the same techniques. In this case, however, the loss of symmetry will complicate some of the necessary calculations.

\section{Computational Aspects and Subdominant Connective Constants}
\label{section:computations}

All known automatic methods for computing diagonals of rational functions, either exactly or asymptotically, suffer from large run-times. Recent advances have improved the situation, though such calculations still remain out of reach for even reasonably sized rational functions in more than a few variables. We comment on two such implementations.

Apagodu and Zeilberger~\cite{apagodu:smaz} provide an algorithm that produces a linear recurrence with polynomial coefficients (in $n$) for the diagonal coefficients of a rational function. Applying the algorithm to $C_3(n)$ returns, after a few hours, a recurrence of order $6$ with polynomial coefficients of degree at most $7$. We did not attempt to apply the algorithm to $C_4(n)$. Ekhad and Zeilberger~\cite{zeilberger:singular-vector-tuples} note that it is much faster to generate terms of the sequence $C_3(n)$ and guess a linear recurrence. More recently, Lairez~\cite{lairez:computing-periods} has provided a Magma implementation to produce the differential equation satisfied by the diagonal of a rational function. It finds the generating function for $C_3(n)$ in a few seconds and the generating function for $C_4(n)$ in about $40$ minutes. 

On the asymptotic side, recent work of Melczer and Salvy~\cite{melczer:symbolic-numeric-tools} provides an improved algorithm to rigorously compute the asymptotic behavior of diagonals of rational functions. Their implementation provides the correct asymptotic behavior for $C_3(n)$ and $C_4(n)$ in a few seconds, and that of $C_5(n)$ in a few minutes.

Upon the calculation of the linear recurrence satisfied by $C_3(n)$, Ekhad and Zeilberger note that for the correct initial conditions the connective constant (better known in some circles as the exponential growth rate) is $8$. However, for most other initial conditions, the resulting sequence would have connective constant $9$. Though we are not able to find linear recurrences for $C_d(n)$ for $d \geq 5$, we can provide some evidence that this phenomenon of subdominant connective constants persists for all values of $d$. 

We employ the method of differential approximants, pioneered by Guttmann and Joyce~\cite{guttmann:a-new-method} and a favorite tool of statistical mechanists. It allows for experimental estimation of the asymptotic behavior of a sequence given only a finite number of known initial terms. A forthcoming article~\cite{pantone:diff-approx-2} by the present author will explore the inner workings of the method, its usefulness to enumerative combinatorics, and provide an open-source implementation. 

Using the first $100$ terms of $C_3(n)$, the method of differential approximants predicts that the generating function $F_3(x)$ has, as expected, a singularity located at
\[
	x \approx 0.12500000000000000000000000000001 \pm (2\cdot 10^{-32})
\]
corresponding to the known connective constant $8$. More interestingly, it also detects a singularity located at
\[
	x \approx 0.11111111111113 \pm (4 \cdot 10^{-14}).
\]
In most cases, this would imply a connective constant $9$. Being in that case the dominant singularity, we would expect it to be estimated \emph{more} accurately than the singularities further from the origin, not less. In our experience, this indicates that a sequence has a subdominant connective constant, as is known to be true in this case.

Applying the same process to the first $100$ terms of $C_4(n)$ yields estimates for the location of singularities of $F_4(x)$ at
\begin{align*}
	x &\approx 0.0123456790123456790123456790123456790123456790123457 \pm 2\cdot 10^{-52}\text{, and}\\
	x &\approx 0.00799999999999999 \pm (5 \cdot 10^{-17}).
\end{align*}
The first indicates the known connective constant $81$, while the second indicates that this connective constant is subdominant to a connective constant $125$.

The first $70$ terms of $C_5(n)$ are sufficient to predict the location of singularities of $F_5(x)$ to be
\begin{align*}
	x &\approx 0.000976562499999999999999999999999999996 \pm 4\cdot 10^{-39}\text{, and}\\
	x &\approx 0.0004164930 \pm (2 \cdot 10^{-10}),
\end{align*}
implying that the known connective constant $1024$ is subdominant to a connective constant $2401$.

This evidence leads us to conjecture that the known connective constants $(d-1)^d$ of all $C_d(n)$ are subdominant to the connective constants $(2d-3)^{d-1}$ for generic solutions to the linear recurrence for $C_d(n)$. \ \\

\textbf{Acknowledgements:} The author would like to thank the referees for their careful reading and feedback, which significantly improved this article.

\bibliographystyle{acm}
\bibliography{/Users/jay/Dropbox/Research/refs/bib.bib}

\end{document}